\documentclass[reqno,11pt,a4paper]{amsart}
\usepackage{amsmath,amssymb,latexsym,hyperref}
\usepackage{amscd}

\usepackage{enumerate}
\usepackage[all]{xy}
\usepackage{fancyhdr}
\pagestyle{fancy}
\usepackage{color}
\usepackage{comment} 

\linespread{1.1}
\textwidth = 390pt
\numberwithin{equation}{section}

\usepackage[english]{babel}

\newcommand{\p}{\mathfrak p}

\newcommand{\loc}{\mathrm{loc}}
\newcommand{\Char}{\mathrm{Char}}

\newcommand{\col}{\mathrm{Col}}
\newcommand{\ucol}{\widetilde{\col}}
\newcommand{\QQ}{\mathbb{Q}}

\newcommand{\Qp}{\mathbb{Q}_p}
\newcommand{\Zp}{\mathbb{Z}_p}

\newcommand{\zk}{z_\mathrm{Kato}}

\newcommand{\GL}{\mathrm{GL}}
\newcommand{\Hom}{\mathrm{Hom}}

\newcommand{\Sel}{\mathrm{Sel}}

\newcommand{\cyc}{\mathrm{cyc}}

\newcommand{\tor}{\mathrm{tor}}

\newcommand{\rank}{\mathrm{rank}}
\newcommand{\Ep}{E_{p^\infty}}
\newcommand{\HIw}{H^1_{\mathrm{Iw}}}

\newcommand{\Gal}{\mathrm{Gal}}
\newcommand{\cZ}{\mathcal{Z}}


\usepackage[OT2,OT1]{fontenc}
\newcommand\cyr{%
\renewcommand\rmdefault{wncyr}%
\renewcommand\sfdefault{wncyss}%
\renewcommand\encodingdefault{OT2}%
\normalfont\selectfont}

\DeclareTextFontCommand{\textcyr}{\cyr}

\newtheorem{theorem}{Theorem}[section]
\newtheorem{proposition}[theorem]{Proposition}
\newtheorem{lemma}[theorem]{Lemma}

\newtheorem{question}[theorem]{Question}

\theoremstyle{remark}

\definecolor{Green}{rgb}{0.0, 0.5, 0.0}

\newcommand{\Keywords}[1]{\par\noindent
{\small{Keywords and phrases}: #1}}

\newcommand{\AMS}[1]{\par\noindent
{\small{AMS Subject Classification}: #1}}

\author[A. Lei]{Antonio Lei}
\address{(A. Lei) D\'epartement de Math\'ematiques et de Statistiques, Universit\'e Laval, Pavillon Alexandre-Vachon, 1045 Avenue de la M\'edecine, Qu\'ebec, QC, Canada G1V 0A6}
\email{antonio.lei@mat.ulaval.ca}

\author[R. Sujatha]{R. Sujatha}
\address{(R. Sujatha) Mathematics Department, 1984, Mathematics Road, University of British Columbia,  Vancouver, 
Canada V6T 1Z2}
\email{sujatha@math.ubc.ca}

\setlength{\headsep}{25pt}

\lhead{}
\lfoot{}
\chead{\scriptsize{\uppercase{}}}
\cfoot{\thepage}
\rhead{}
\rfoot{}

\begin{document}

\title{On fine Selmer groups and the greatest common divisor of signed and chromatic $p$-adic $L$-functions}

\begin{abstract}
Let $E/\mathbb{Q}$ be an elliptic curve and $p$ an odd prime where $E$ has good supersingular reduction. Let $F_1$ denote the characteristic power series of the Pontryagin dual of the fine Selmer group of $E$ over the cyclotomic $\mathbb{Z}_p$-extension of $\mathbb{Q}$ and let $F_2$  denote the greatest common divisor of Pollack's plus and minus $p$-adic $L$-functions or Sprung's sharp and flat $p$-adic $L$-functions attached to $E$, depending on whether   $a_p(E)=0$ or $a_p(E)\ne0$. We study a link between the divisors of $F_1$ and $F_2$ in the Iwasawa algebra.   This gives new insights into problems posed by Greenberg and Pollack--Kurihara on these elements.  
\end{abstract}

\maketitle
{
\AMS{11R23, 11G05}
\Keywords{Elliptic curves, supersingular primes, Selmer groups}
}

\section{Introduction}\label{intro}
Let $p$ a fixed odd prime number. We write  $\QQ_\cyc$ for the cyclotomic $\Zp$-extension of $\QQ$ and let  $\Gamma$ denote the Galois group $\Gal(\QQ_\cyc/\QQ)$. The Iwasawa algebra $\Lambda=\Zp[[\Gamma]]$ is defined to be $\varprojlim \Zp[\Gamma/\Gamma^{p^n}]$, where the connecting maps are projections. After fixing a topological generator $\gamma$ of $\Gamma$, there is an isomorphism of rings $\Lambda\cong\Zp[[X]]$, sending $\gamma$ to $X+1$. 

 Let $E/\QQ$ be an elliptic curve and write $\Sel_0(E/\QQ_\cyc)$ for the fine Selmer group of $E$ over $\QQ_\cyc$ as defined in \cite{CS} (whose precise definition is reviewed in \eqref{eq:fine} below). It has been shown by Kato in \cite{kato} that $\Sel_0(E/\QQ_\cyc)^\vee$ is a finitely generated $\Lambda$-torsion module.  Conjecture A in \cite{CS} further predicts that it should be a finitely generated $\Zp$-module, which is equivalent to saying that its $\mu$-invariant is zero. Examples validating this conjecture can be found in a recent work of Kundu and the second named author \cite{KS}.

For an integer $n\ge1$, we write $$\Phi_n=\frac{(1+X)^{p^n}-1}{(1+X)^{p^{n-1}}-1}\in\Lambda$$ for the $p^n$-th cyclotomic polynomial in $1+X$. Let $K_n$ denote the unique sub-extension of $\QQ_\cyc$ such that $[K_n:\QQ]=p^n$.  Define 
\[
e_n=\frac{\rank E(K_n)-\rank E(K_{n-1})}{p^{n-1}(p-1)}.
\]
When $n=0$, we define $\Phi_0=X$ and $e_0=\rank E(\QQ)$.
We recall from \cite[Problem~0.7]{KP} that the following problem was posed by Greenberg:
\begin{equation}\label{Gr}\tag{Gr}
\Char_\Lambda \Sel_0(E/\QQ_\cyc)^\vee\stackrel{?}{=}\Big(\prod_{{e_n\ge 1},{n\ge0}}\Phi_n^{e_n-1}\Big).
\end{equation}
Here, $\Char_\Lambda M$ denotes the $\Lambda$-characteristic ideal of a finitely generated torsion $\Lambda$-module $M$. In particular, if \eqref{Gr} holds, then Conjecture A of \cite{CS} holds as well.

We now turn our attention to  the case where $E$ has supersingular reduction at $p$.  The classical $p$-adic $L$-functions attached to $E$ are $p$-adic power series with unbounded denominators (in particular, they are not elements of $\Lambda$).  In \cite{pollack}, {under the hypothesis that $a_p(E)=0$,} Pollack introduced the so-called plus and minus $p$-adic $L$-functions $L_p^+(E)$ and $L_p^-(E)$, which are non-zero elements of $\Lambda$, interpolating complex $L$-values of $E$ twisted by Dirichlet characters factoring through $\Gamma$. They can be regarded as the analytic analogues of  certain cotorsion Selmer groups $\Sel^\pm(E/\QQ_\cyc)$ defined by Kobayashi \cite{kobayashi}. In fact, Kobayashi formulated the following main conjecture
\begin{equation}
    \Char_\Lambda \Sel^\pm(E/\QQ_\cyc)^\vee\stackrel{?}{=}\left(L_p^\pm(E)\right).
\label{Ko}\tag{Ko}
\end{equation}
{When $a_p(E)\ne0$, Sprung \cite{sprung} generalized the works of Pollack and Kobayashi by introducing the sharp and flat $p$-adic $L$-functions $L_p^\sharp(E)$ and $L_p^\flat(E)$ as well as the corresponding Selmer groups $\Sel^{\sharp}(E/\QQ_\cyc)$ and $\Sel^{\flat}(E/\QQ_\cyc)$. He showed that there exists $\star\in \{\sharp,\flat\}$ such that  $L_p^\star(E)\ne 0$ and that $\Sel^{\star}(E/\QQ_\cyc)^\vee$ is $\Lambda$-torsion (see Proposition~6.14 and Theorem~7.14 of \cite{sprung}). Moreover, he formulated the analogue of \eqref{Ko}: If $\star\in \{\sharp,\flat\}$ is such that $L_p^\star(E)\ne 0$, then
\begin{equation}
    \Char_\Lambda \Sel^{\star}(E/\QQ_\cyc)^\vee\stackrel{?}{=}\left(L_p^{\star}(E)\right).
\label{Sp}\tag{Sp}
\end{equation}
Furthermore, by \cite[Theorem~7.4]{kobayashi} and \cite[discussion on P.1505]{sprung} respectively, \eqref{Ko} and \eqref{Sp} are equivalent to Kato's main conjecture in \cite{kato},} which can be expressed as
\begin{equation}\label{Ka}\tag{Ka}
\Char_\Lambda \Sel_0(E/\QQ_\cyc)^\vee\stackrel{?}{=}\Char_\Lambda\HIw(\QQ,T)/\cZ,
\end{equation}
where $T$ is the $p$-adic Tate module of $E$, $\HIw(\QQ,T)$ is the inverse limit of certain global Galois cohomological groups over $K_n$ and $\cZ$ is the $\Lambda$-module generated by certain zeta elements (we will review the definition of these objects in the main part of the article).

Kato showed that there exists an integer $n$ such that 
\[
\Char_\Lambda \Sel_0(E/\QQ_\cyc)^\vee\supset p^n\Char_\Lambda\HIw(\QQ,T)/\cZ
\]
using the theory of Euler systems. Furthermore, if the Galois representation $G_\QQ\rightarrow \GL_{\Zp}(T)$  is surjective, then we may take $n=0$. In other words, the inclusion $\supset$ holds in \eqref{Ka}. This has the consequence that {one inclusion in the main conjectures \eqref{Ko} and \eqref{Sp} also holds, namely:
\begin{equation}\label{eq:1sideIMC}
\left(L_p^\star(E)\right)\subset \Char_{\Lambda}\Sel^\star(E/\QQ_\cyc)^\vee,
\end{equation}
where $\star\in\{+,-\}$ or $\{\sharp,\flat\}$, depending on whether $a_p(E)=0$ or $a_p(E)\ne0$.}

When  $E$ has complex multiplication {(in which case $a_p(E)$ is always $0$)},  Pollack and Rubin \cite{PR} showed that  \eqref{Ko} holds. Consequently, \eqref{Ka} holds as well. In the non-CM case, recent progress on these conjectures has been made by Wan \cite{wan} {and Sprung \cite{sprung2}}.

It follows from their definitions that $\Sel_0(E/\QQ_\cyc)$ is a subgroup of  both plus and minus {(or sharp and flat)} Selmer groups. In particular, we have the inclusions
\begin{equation}
 \Char_{\Lambda}\Sel^\star(E/\QQ_\cyc)^\vee\subset\Char_{\Lambda}\Sel_0(E/\QQ_\cyc)^\vee .
\label{eq:finepm}
\end{equation}
Pollack has written a MAGMA program which computes numerically  the $p$-adic $L$-functions $L_p^\star(E)$ for a given $E$ (see \url{http://math.bu.edu/people/rpollack/Data/data.html})\footnote{Even though Pollack's algorithm was written before Sprung's $p$-adic $L$-functions $L_p^{\sharp/\flat}(E)$ were defined, it in fact computes the Iwasawa invariants considered by Perrin-Riou in \cite{P-R} when $a_3(E)=\pm 3$. These in turn give the Iwasawa invariants of $L_p^{\sharp/\flat}(E)$ (see \cite[\S5]{sprung-crelles}). We thank Robert Pollack and Florian Sprung for explaining this to us.}. One can observe that the $\mu$-invariants of $L_p^\star(E)$ turn out to be zero in all the examples that have been considered by Pollack. Therefore, on combining \eqref{eq:1sideIMC} and \eqref{eq:finepm},  we deduce that the $\mu$-invariant of  $\Char_{\Lambda}\Sel_0(E/\QQ_\cyc)^\vee$ is also zero.  In particular, this gives evidence towards the validity of Conjecture~A in \cite{CS}. In this article, we are interested in the following question:
\begin{question}\label{Q}
What can \cite[Conjecture A]{CS} tell us about the $\mu$-invariants of  $L_p^\pm(E)$ and $L_p^{\sharp/\flat}(E)$?
\end{question}

In \cite[Problem~3.2]{KP}, {under the assumption that $a_p(E)=0$,} the following problem was posed by Kurihara and Pollack:
\begin{equation}\label{KP}\tag{KP}
\gcd\left(L_p^+(E),L_p^-(E)\right)\stackrel{?}{=}X^{e_0}\prod_{{e_n\ge 1},{n\ge1}}\Phi_n^{e_n-1}.
\end{equation}
Here, we express the greatest common divisor of two elements in $\Lambda$ in the form $p^\mu h\in\Lambda$, where $h$ is a distinguished polynomial.
The two problems \eqref{Gr} and \eqref{KP} are intimately linked. Indeed, Kurihara and Pollack showed in \cite[\S3]{KP} that under certain hypotheses, they are equivalent to each other. Furthermore, they have found several  numerical examples where the answers to both \eqref{Gr} and \eqref{KP} are affirmative.
We observe that the problems \eqref{Gr} and \eqref{KP} suggest that the following equality holds
\begin{equation}
\left(\gcd\left(L_p^+(E),L_p^-(E)\right)\right)\stackrel{?}{=}X^{\delta_E}\Char_{\Lambda}\Sel_0(E/\QQ_\cyc)^\vee,
\label{eq:Xgcd}    
\end{equation}
where $\delta_E\in\{0,1\}$. The appearance of the term $X^\delta$ on the right-hand side originates from the discrepancy\footnote{This discrepancy seems to be related to the fact that 
\[
E^+(\QQ_{\cyc,\p})\bigcap E^-(\QQ_{\cyc,\p})=E(\Qp),
\]
where $E^\pm(\QQ_{\cyc,\p})$ are Kobayashi's plus and minus norm groups which are used to define  $\Sel^\pm(E/\QQ_\cyc)$ (see \S\ref{S:selmer} for more details). This suggests that the plus and minus Selmer groups might capture more information on the Mordell-Weil group $E(\QQ)$ than the fine Selmer group.} of the exponents of $X$ in \eqref{Gr} and \eqref{KP}.

The main result of the present article is the following theorem which gives evidence towards \eqref{eq:Xgcd}. It can also be regarded as partial evidence towards \eqref{Gr} and \eqref{KP}.

\begin{theorem}\label{thm:main}
Suppose that $E/\QQ$ is an elliptic curve with supersingular reduction at $p$.  {In the case where $a_p(E)\ne0$, we assume that both $L_p^\sharp(E)$ aand $L_p^\flat(E)$ are non-zero.} Furthermore, assume that \eqref{Ka} holds (in particular, \eqref{Ko} and {\eqref{Sp} also hold}). Let $f\in \Lambda$ be an irreducible element that is coprime to $X$. If $a_p(E)=0$, then $f$ divides $\gcd\left(L_p^+(E),L_p^-(E)\right)$ if and only if $f$ divides $\Char_{\Lambda}\Sel_0(E/\QQ_\cyc)^\vee$. {Likewise, if $a_p(E)\ne0$, then $f$ divides $\gcd\left(L_p^\sharp(E),L_p^\flat(E)\right)$ if and only if $f$ divides $\Char_{\Lambda}\Sel_0(E/\QQ_\cyc)^\vee$. }
\end{theorem}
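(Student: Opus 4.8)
The plan is to fix the irreducible element $f$, coprime to $X$, and to localize at the height-one prime $(f)$: the ring $\Lambda_{(f)}$ is a discrete valuation ring with uniformizer $f$, and divisibility by $f$ is detected by the valuation $v_{(f)}$ on $\Lambda_{(f)}$. One implication is formal. Since $\Sel_0(E/\QQ_\cyc)\subseteq\Sel^\star(E/\QQ_\cyc)$, the inclusion \eqref{eq:finepm} and the main conjectures \eqref{Ko}/\eqref{Sp} give that $\Char_\Lambda\Sel_0(E/\QQ_\cyc)^\vee$ divides $(L_p^\star(E))$ for each sign, hence divides $\gcd(L_p^+(E),L_p^-(E))$ (resp. $\gcd(L_p^\sharp(E),L_p^\flat(E))$). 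Thus $f\mid\Char_\Lambda\Sel_0(E/\QQ_\cyc)^\vee$ forces $f\mid\gcd$, and only the reverse implication requires work.

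For the reverse implication I would rephrase both sides through Kato's zeta element $\zk\in\HIw(\QQ,T)$. By \eqref{Ka}, $\Char_\Lambda\Sel_0(E/\QQ_\cyc)^\vee=\Char_\Lambda(\HIw(\QQ,T)/\cZ)$ with $\cZ=\Lambda\zk$, so $f\nmid\Char_\Lambda\Sel_0(E/\QQ_\cyc)^\vee$ is equivalent to $\zk$ generating the free rank-one module $\HIw(\QQ,T)_{(f)}$. On the analytic side the signed $p$-adic $L$-functions are the images $L_p^\star(E)=\col^\star(\loc_p\zk)$ of $\loc_p\zk\in\HIw(\QQ_p,T)$ under the signed Coleman maps. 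The local input I would use is that the combined Coleman map $(\col^+,\col^-)$ (resp. $(\col^\sharp,\col^\flat)$) from $\HIw(\QQ_p,T)$ to $\Lambda\oplus\Lambda$ has $X$-primary cokernel, so it becomes an isomorphism over $\Lambda_{(f)}$; consequently $v_{(f)}(\gcd)$ equals the $f$-content of $\loc_p\zk$ in the free module $\HIw(\QQ_p,T)_{(f)}$. Granting this, the reverse implication reduces to a single assertion: if $\zk$ generates $\HIw(\QQ,T)_{(f)}$, then $\loc_p\zk$ is a primitive vector of $\HIw(\QQ_p,T)_{(f)}$ (its content vanishes), whence $f\nmid\gcd$.

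To establish this primitivity I would appeal to global duality. When $\zk$ generates $\HIw(\QQ,T)_{(f)}$, the content of $\loc_p\zk$ is exactly the length of the torsion submodule of $\coker(\loc_p\colon\HIw(\QQ,T)\to\HIw(\QQ_p,T))$ over $\Lambda_{(f)}$. The Poitou--Tate sequence realizes this cokernel, up to the contribution of the primes of bad reduction, as a submodule of $\HIw(\QQ,T^\ast)^\vee$; as $\HIw(\QQ,T^\ast)$ is $\Lambda$-torsion-free, such a submodule is free after localizing at $(f)$, so the only possible torsion of $\coker(\loc_p)$ comes either from the trivial character (the $X$-primary part) or from $H^2_{\mathrm{Iw}}(\QQ,T)$ through the connecting map, the latter having the characteristic ideal of $\Sel_0(E/\QQ_\cyc)^\vee$. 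Since $f\nmid\Char_\Lambda\Sel_0(E/\QQ_\cyc)^\vee$ and $f\nmid X$, both contributions are coprime to $f$, so $\loc_p\zk$ is primitive. Cohomologically this is the statement that the local quotients $\Sel^\star(E/\QQ_\cyc)/\Sel_0(E/\QQ_\cyc)$ for the two signs share no characteristic divisor away from $X$; this is the shadow of $E^+(\QQ_{\cyc,\p})\cap E^-(\QQ_{\cyc,\p})=E(\Qp)$ recorded in the introduction and is the origin of the factor $X^{\delta_E}$ in \eqref{eq:Xgcd}.

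The main obstacle is precisely this last step: identifying the torsion of $\coker(\loc_p)$ through Poitou--Tate and separating off its $X$-primary part requires care with $H^2_{\mathrm{Iw}}(\QQ,T)$ (equivalently the $\Sha$-type part of the fine Selmer group) and with the local cohomology at the bad primes, whose Euler factors could a priori be divisible by $f$; it is the need to absorb these terms that keeps the exact identity \eqref{eq:Xgcd} out of reach and confines the conclusion to an equality of $f$-divisibilities. This analysis must be run uniformly in the two cases $a_p(E)=0$ and $a_p(E)\ne0$, using Kobayashi's and Sprung's Coleman maps respectively, the only inputs particular to each case being the $X$-primarity of the cokernel of the combined Coleman map and the normalization $L_p^\star(E)=\col^\star(\loc_p\zk)$.
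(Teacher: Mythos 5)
Your architecture is essentially the paper's, transposed into the language of the discrete valuation ring $\Lambda_{(f)}$: the easy implication via \eqref{eq:finepm} together with \eqref{Ko}/\eqref{Sp}; the reduction of $v_{(f)}\bigl(\gcd\bigr)$ to the $f$-content of $\loc_p(\zk)$ using the $X$-primarity of the cokernel of the two-variable Coleman map (this is exactly the paper's sequence \eqref{eq:SES-KP}); and the splitting of that content into the contribution of $\HIw(\QQ,T)/\cZ$ (killed by \eqref{Ka} and the hypothesis $f\nmid\Char_\Lambda\Sel_0(E/\QQ_\cyc)^\vee$) plus the length of the $f$-torsion of $\coker(\loc_p)$ --- this is precisely the decomposition underlying Proposition~\ref{pro:key}.

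The gap is in the one step you yourself flag: bounding the torsion of $\coker(\loc_p)$ away from $X$. The form of Poitou--Tate you invoke (through $H^2_{\mathrm{Iw}}(\QQ,T)$, the dual global $H^1$, and the local cohomology at the bad primes) leaves exactly the terms you admit you cannot absorb, and your closing claim that these only obstruct the exact identity \eqref{eq:Xgcd} is not correct: if $f$ divided the characteristic ideal of $H^2_{\mathrm{Iw}}(\QQ_v,T)$ for some bad prime $v$ (an Euler-type factor), your own accounting would permit $f$-torsion in $\coker(\loc_p)$, so $\loc_p(\zk)$ could fail to be primitive at $(f)$ and the divisibility conclusion itself would fail. (Also, the assertion that a submodule of $\HIw(\QQ,T^\ast)^\vee$ is free after localization because $\HIw(\QQ,T^\ast)$ is torsion-free does not parse: Pontryagin duals of torsion-free modules need not be torsion-free.) The paper closes this step with two inputs absent from your sketch: the four-term exact sequence \eqref{eq:PT} (the inverse limit of Kobayashi's (7.18)), in which no bad-prime terms appear because they are already absorbed into the definition of $\Sel_{p^\infty}(E/\QQ_\cyc)$, so that $\coker(\loc_p)$ injects into $\Sel_{p^\infty}(E/\QQ_\cyc)^\vee$; and Wingberg's theorem that $\Char_\Lambda\Sel_{p^\infty}(E/\QQ_\cyc)^\vee_\tor=\Char_\Lambda\Sel_0(E/\QQ_\cyc)^\vee$, which converts the hypothesis $f\nmid\Char_\Lambda\Sel_0(E/\QQ_\cyc)^\vee$ into $f\nmid\Char_\Lambda\bigl(\coker(\loc_p)\bigr)_\tor$. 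With these two facts substituted for your third paragraph, your localized argument becomes a complete proof, equivalent to the one in Section~\ref{S:proof}.
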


In particular, Theorem~\ref{thm:main} gives the following answer to Question~\ref{Q} on taking $f=p$: If \eqref{Ka} holds, then \cite[Conjecture~A]{CS} (which says that $p$ does not divide $\Char_{\Lambda}\Sel_0(E/\QQ_\cyc)^\vee$) is equivalent to $p\nmid \gcd\left(L_p^+(E),L_p^-(E)\right)$ {(or $p\nmid \gcd\left(L_p^\sharp(E),L_p^\flat(E)\right)$)}, which is the same as saying that the $\mu$-invariant of at least one of the two $p$-adic $L$-functions $L_p^\pm(E)$ {(or $L_p^{\sharp/\flat}(E)$)} is zero.

\subsection*{Outline of the article} We review general definitions and preliminary results on Iwasawa modules and Selmer groups in Section~\ref{sec:notation}. The proof of Theorem~\ref{thm:main} is then given  in Section~\ref{S:proof}.   At the end of the article (Section \ref{S:ex}), we discuss some numerical examples.

\subsection*{Acknowledgment}
This work was initiated during the first named author's visit to PIMS in March 2020. He would like to thank PIMS for the hospitality. We would like to thank Filippo Nuccio, Robert Pollack, Florian Sprung and Chris Wuthrich for helpful discussions during the preparation of this article.
Both authors also gratefully acknowledge support of their respective
NSERC  Discovery Grants. {Finally, we thank the anonymous referee for very helpful comments and suggestions that led to many improvements to the presentation of the article.}

\section{Notation and preliminary results}\label{sec:notation}

\subsection{Generalities on $\Lambda$-modules}
Let $M$ be a $\Lambda$-module, we write $M^\vee$ for its Pontryagin dual
\[
\Hom_{\mathrm{conts}}(M,\Qp/\Zp).
\]
 We let $M_\tor$ denote  the maximal torsion submodule of $M$. 

If $M$ is a finitely generated $\Lambda$-module,  there is a pseudo-isomorphism of $\Lambda$-modules
\begin{equation}
M\sim \Lambda^{ r}\oplus \bigoplus_{i=1}^s \Lambda/p^{a_i}\oplus \bigoplus_{i=1}^t\Lambda/(F_i^{b_i}),
\label{eq:pseudo}
\end{equation}
where $r,s,t,a_i,b_i$ are non-negative integers and $F_i$ are irreducible distinguished polynomials. We define the $\mu$- and $\lambda$-invariants of $M$ by
\begin{align*}
\mu(M)&:=\sum_{i=1}^sa_i,\\
\lambda(M)&:=\sum_{i=1}^t\deg(F_i).
\end{align*}
In the case where $M$ is $\Lambda$-torsion, we have $r=0$ and the characteristic ideal of $M$ is defined to be
\[
\Char_\Lambda(M)=\left(p^{\mu(M)}\prod_{i=1}^t F_i^{b_i}\right)\Lambda.
\]

Given any irreducible element $f\in\Lambda$, we write  $M[f]$ for the $\Lambda$-submodule of $M$ defined by
\[
\{m\in M:f\cdot m=0\}.
\]
The following lemmas will be employed in our proof of Theorem~\ref{thm:main}.

\begin{lemma}\label{lem:coprime}
Let $M$ be a finitely generated $\Lambda$-module and $f\in\Lambda$ an irreducible element. Then, $M[f]$ is finite if and only if $f\nmid\Char_{\Lambda}(M_\tor)$.
\end{lemma}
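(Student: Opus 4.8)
The plan is to reduce the statement to the elementary module appearing in the structure theorem \eqref{eq:pseudo} and then compute the $f$-torsion of each summand by hand. First I would note that, since $f$ is a nonzero element of the integral domain $\Lambda$, every element of $M[f]$ is $\Lambda$-torsion, so $M[f]=M_\tor[f]$. This lets me replace $M$ by $M_\tor$ and assume $M$ is torsion; then the free part $\Lambda^r$ in \eqref{eq:pseudo} disappears and there is a pseudo-isomorphism $M_\tor\to N$, where $N=\bigoplus_{i=1}^s \Lambda/p^{a_i}\oplus\bigoplus_{i=1}^t\Lambda/(F_i^{b_i})$ and $\Char_\Lambda(M_\tor)=\big(p^{\mu}\prod_i F_i^{b_i}\big)$.

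Second, I would show that the property ``$M[f]$ is finite'' is invariant under pseudo-isomorphism. Taking $f$-torsion is the left-exact functor $(-)[f]=\Hom_\Lambda(\Lambda/(f),-)$. Given a pseudo-isomorphism $\phi\colon M_\tor\to N$ with finite kernel $K$ and finite cokernel $C$, I factor it through $\Img\phi$ and apply $(-)[f]$ to the two short exact sequences $0\to K\to M_\tor\to\Img\phi\to 0$ and $0\to\Img\phi\to N\to C\to 0$. The resulting exact sequences $0\to K[f]\to M_\tor[f]\to(\Img\phi)[f]$ and $0\to(\Img\phi)[f]\to N[f]\to C[f]$ show that $M_\tor[f]$ and $N[f]$ each differ from $(\Img\phi)[f]$ only by the finite groups $K[f]$ and $C[f]$; hence $M_\tor[f]$ is finite if and only if $N[f]$ is finite.

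Third, I would compute $N[f]$ according to the two possibilities for an irreducible $f$. The key input is that $\Lambda$ is a UFD: if $g\in\Lambda$ is coprime to $f$, then multiplication by $f$ on $\Lambda/(g)$ is injective (from $fh\in(g)$ and $\gcd(f,g)=1$ one gets $h\in(g)$), so such a summand contributes nothing to $N[f]$. Thus only the summands ``supported at $f$'' survive. If $f=p$ these are the $\Lambda/p^{a_i}$ with $a_i\ge 1$, and $(\Lambda/p^{a_i})[p]\cong\Lambda/p\cong\FF_p[[X]]$ is infinite; if $f=F$ is a distinguished irreducible these are the $\Lambda/(F_i^{b_i})$ with $F_i$ an associate of $F$, and $(\Lambda/(F^{b}))[F]\cong\Lambda/(F)\cong\Zp^{\deg F}$ is infinite by Weierstrass division. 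In either case $N[f]$ is infinite precisely when at least one surviving summand occurs, that is, precisely when $f\mid\Char_\Lambda(M_\tor)$. Combining this with the first two steps yields the desired equivalence.

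I expect the only genuinely non-formal step to be the second one, the pseudo-isomorphism invariance, since a pseudo-isomorphism is not an isomorphism and one must check that passing to $f$-torsion does not turn a finite discrepancy into an infinite one; the left-exactness of $(-)[f]$ together with the finiteness of $K$ and $C$ handles this cleanly. The remaining steps are direct consequences of the factoriality of $\Lambda$ and of the Weierstrass preparation theorem.
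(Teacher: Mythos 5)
Your proof follows essentially the same route as the paper's: reduce via the structure theorem \eqref{eq:pseudo} to the elementary summands $\Lambda/(g^n)$, note that such a summand has zero $f$-torsion when $\gcd(f,g)=1$ and infinite $f$-torsion (isomorphic to $\Lambda/(f)$) when $g$ is an associate of $f$, and conclude. Your summand computations agree with the paper's, and your step~2 supplies a detail the paper states without proof.

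There is, however, one spot where your write-up does not quite deliver what it claims, and it sits in the direction of the equivalence you actually need. From the left-exact sequence $0\to K[f]\to M_\tor[f]\to(\Img\phi)[f]$ you may only conclude that $M_\tor[f]/K[f]$ embeds into $(\Img\phi)[f]$; this gives ``$(\Img\phi)[f]$ finite $\Rightarrow M_\tor[f]$ finite'', but \emph{not} the converse, since the map $M_\tor[f]\to(\Img\phi)[f]$ need not be surjective and a priori $(\Img\phi)[f]$ could be infinite while $M_\tor[f]$ is finite. The converse is exactly what is required to show that $f\mid\Char_\Lambda(M_\tor)$ forces $M[f]$ to be infinite. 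The repair is one line: continue the long exact sequence for $\Hom_\Lambda(\Lambda/(f),-)$ one more term, so that the cokernel of $M_\tor[f]\to(\Img\phi)[f]$ embeds into $\Ext^1_\Lambda(\Lambda/(f),K)\cong K/fK$ (computed from the resolution $0\to\Lambda\xrightarrow{f}\Lambda\to\Lambda/(f)\to0$), which is finite because $K$ is. Hence $M_\tor[f]$ surjects onto a finite-index subgroup of $(\Img\phi)[f]$ and the finiteness of the two groups is indeed equivalent. With that extra term your argument is complete and correct; left-exactness alone, contrary to your closing remark, does not ``handle this cleanly'' by itself.
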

\begin{proof}
We can see from the the pseudo-isomorphism \eqref{eq:pseudo} that $M[f]$ is finite if and only if $\left(\Lambda/(g^n)\right)[f]$ is finite for all $\Lambda/(g^n)$ that appear on the right-hand side of \eqref{eq:pseudo}. Therefore, without loss of generality, we may assume that $M=\Lambda/(g^n)$ for some irreducible element $g$  of $\Lambda$ and $n\ge1$. 
It is  clear that $M[f]=0$ if $\gcd(f,g)=1$ and $M[f]=f^{n-1}\Lambda/(f^n)$ if $f=g$, which is of infinite cardinality. Therefore, our lemma follows.
\end{proof}

\begin{lemma}\label{lemmas}
Let 
\[
0\rightarrow A\rightarrow B\rightarrow C\rightarrow0
\]
be a short exact sequence of finitely generated $\Lambda$-modules  and $f\in\Lambda$ an irreducible element.
\begin{enumerate}[(a)]
    \item Suppose that $f\nmid \Char_\Lambda B_\tor$, then $f\nmid \Char_\Lambda A_\tor$.
    \item Suppose that $f\nmid \Char_\Lambda A_\tor$ and $f\nmid\Char_\Lambda C_\tor$, then $f\nmid \Char_\Lambda B_\tor$.
    \item Suppose that $A$ is a torsion $\Lambda$-module, then we have the equality
    \[
    \Char_\Lambda(A)\Char_\Lambda(C_\tor)=\Char_\Lambda (B_\tor).
    \]
\end{enumerate}
\end{lemma}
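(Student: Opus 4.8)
The plan is to reduce all three parts to Lemma~\ref{lem:coprime}, which translates the condition $f\nmid\Char_\Lambda(M_\tor)$ into the finiteness of the $f$-torsion submodule $M[f]$. The starting observation is that the short exact sequence $0\to A\to B\to C\to0$ induces a left-exact sequence on $f$-torsion,
\[
0\to A[f]\to B[f]\to C[f].
\]
This is immediate and requires no snake lemma: since $A$ is a submodule of $B$ we have $A[f]=A\cap B[f]$, any element of $B[f]$ maps to an element of $C[f]$, and the kernel of $B[f]\to C[f]$ is exactly $B[f]\cap A=A[f]$.

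For part (a) I would argue that if $f\nmid\Char_\Lambda B_\tor$ then $B[f]$ is finite by Lemma~\ref{lem:coprime}; since $A[f]$ injects into $B[f]$ it is also finite, and a second application of Lemma~\ref{lem:coprime} gives $f\nmid\Char_\Lambda A_\tor$. For part (b) the two hypotheses give, via Lemma~\ref{lem:coprime}, that both $A[f]$ and $C[f]$ are finite. The exact sequence above then exhibits $B[f]$ as an extension of a submodule of $C[f]$ by $A[f]$, hence $B[f]$ is finite, and Lemma~\ref{lem:coprime} yields $f\nmid\Char_\Lambda B_\tor$.

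Part (c) is the substantive step and requires promoting the sequence to one of \emph{torsion} modules. Since $A$ is $\Lambda$-torsion and injects into $B$, its image lies in $B_\tor$. Let $B'$ denote the preimage of $C_\tor$ under the surjection $B\surj C$. I would then show $B'=B_\tor$: on the one hand, $B'$ is an extension of $C_\tor$ by $A$, both torsion, so $B'$ is torsion and $B'\subseteq B_\tor$; on the other hand, $B_\tor$ maps into a torsion submodule of $C$, hence into $C_\tor$, so $B_\tor\subseteq B'$. This identification produces the short exact sequence of torsion $\Lambda$-modules
\[
0\to A\to B_\tor\to C_\tor\to0,
\]
to which the multiplicativity of characteristic ideals over short exact sequences of torsion modules applies, giving $\Char_\Lambda(B_\tor)=\Char_\Lambda(A)\Char_\Lambda(C_\tor)$.

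The main obstacle is precisely the identification $B'=B_\tor$ in part (c): one must verify that the torsion submodule of $B$ coincides with the preimage of the torsion submodule of $C$, which rests on the facts that an extension of torsion modules is torsion and that a module homomorphism carries torsion to torsion. Once this is established, parts (a) and (b) follow formally from the left-exactness of the functor $(-)[f]$ together with Lemma~\ref{lem:coprime}.
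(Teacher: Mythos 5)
Your proof is correct and follows essentially the same route as the paper: parts (a) and (b) are handled exactly as in the paper, via the injection $A[f]\hookrightarrow B[f]$, the left-exact sequence $0\to A[f]\to B[f]\to C[f]$, and Lemma~\ref{lem:coprime}. For part (c) the paper simply cites the proof of Proposition~2.1 of [HL]; your identification of $B_\tor$ with the preimage of $C_\tor$ (yielding the short exact sequence $0\to A\to B_\tor\to C_\tor\to 0$ of torsion modules) followed by multiplicativity of characteristic ideals is precisely the standard argument behind that citation, so nothing is missing.
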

\begin{proof}
Part (a) follows from Lemma~\ref{lem:coprime} and the fact that  $A[f]$ injects into $B[f]$. Part (b) follows from the exact sequence
\[
0\rightarrow A[f]\rightarrow B[f]\rightarrow C[f]\rightarrow \cdots
\]
and Lemma~\ref{lem:coprime}. Finally, part (c) follows from \cite[proof of Proposition~2.1]{HL}.
\end{proof}

\subsection{Galois cohomology and Selmer groups}\label{S:selmer}
As in the introduction, $T$ denotes the $p$-adic Tate module of $E$. Let $S$ be a finite set of primes of $\QQ$ including the bad primes of $E$, the prime $p$ and the archimedean prime. Given a finite extension of $F$, we write $G_{F,S}$ for the Galois group of the maximal extension of $F$ that is unramified outside $S$. We define
\[
\HIw(\QQ,T)=\varprojlim H^1(G_{K_n,S},T),
\]
where $K_n$ is the unique sub-extension of $\QQ_{\cyc}$ such that $[K_n:\QQ]=p^n$ and the connecting maps in the inverse limit are given by corestrictions. It has been shown in \cite{kato} that there exists a so-called zeta element $\zk\in\HIw(\QQ,T)$, which, when localized at $p$, interpolates complex $L$-values of $E$ twisted by Dirichlet characters factoring through $\Gamma$ under  the dual exponential map of Bloch-Kato as defined in \cite{BK}. We define $\cZ\subset\HIw(\QQ,T)$ to be the $\Lambda$-submodule generated by $\zk$.

Locally, we define 
\[
\HIw(\Qp,T)=\varprojlim H^1(K_{n,v_n},T),
\]
where $v_n$ denotes the unique prime of $K_n$ lying above $p$ and the connecting maps in the inverse limit are again given by corestrictions. {The restriction maps $H^1(K_n,T)\rightarrow H^1(K_{n,v_n},T)$ give}
\[
\loc_p:\HIw(\QQ,T)\hookrightarrow \HIw(\Qp,T),
\]
where the injectivity follows from \cite[Theorem~7.3]{kobayashi} {and \cite[(3) on P.1504]{sprung}}.
Let us write $\cZ_\loc$ for the image of $\cZ$ under the localization map, that is
\[
\cZ_\loc=\loc_p(\cZ).
\]

We finish this section by defining the various Selmer groups of $E$ over $\QQ_\cyc$ studied in this article. {Recall that if $K$ is a number field, the classical $p$-primary Selmer group of $E$ over $K$ is given by
\[
\Sel_{p^\infty}(E/K)= \ker\left(H^1(G_{K,S},\Ep)\rightarrow \bigoplus_{v\in S}J_v(K)\right),
\]
where $J_v(K)$ is defined to be 
$\displaystyle
 \bigoplus_{w|v}\frac{H^1(K_{w},\Ep)}{E(K_w)\otimes\Qp/\Zp}
$
(here, the direct sum runs over all places of $K$ above $v$). 
The classical $p$-primary Selmer group of $E$ over $\QQ_\cyc$ is given by
\[
\Sel_{p^\infty}(E/\QQ_\cyc)= \varinjlim_n \Sel_{p^\infty}(E/K_n),
\]
where the connecting maps are given by restrictions.}

The fine Selmer group of $E$ over $\QQ_\cyc$ is given by
\begin{equation}
   \Sel_0(E/\QQ_\cyc)= \ker\left(\Sel_{p^\infty}(E/\QQ_\cyc)\rightarrow H^1(\QQ_{\cyc,\p},\Ep)\right),
 \label{eq:fine}
\end{equation}
where $\p$ denotes the unique prime of $\QQ_\cyc$ above $p$ {(see \cite[(58) on P.828]{CS})}. {When $a_p(E)=0$,} Kobayashi's plus and minus Selmer groups are defined by
\[
\Sel^\pm(E/\QQ_\cyc)= \ker\left(\Sel_{p^\infty}(E/\QQ_\cyc)\rightarrow \frac{H^1(\QQ_{\cyc,\p},\Ep)}{{E}^\pm(\QQ_{\cyc,\p})\otimes\Qp/\Zp}\right),
\]
where $E^\pm(\QQ_{\cyc,\p})$ are certain subgroups in $E$ defined by some ``jumping conditions" (see \cite[Definition~1.1]{kobayashi}). {When $a_p(E)\ne0$, Sprung's sharp and flat Selmer groups are defined by
\[
\Sel^{\sharp/\flat}(E/\QQ_\cyc)= \ker\left(\Sel_{p^\infty}(E/\QQ_\cyc)\rightarrow \frac{H^1(\QQ_{\cyc,\p},\Ep)}{{E}^{\sharp/\flat}(\QQ_{\cyc,\p})\otimes\Qp/\Zp}\right),
\]
where $E^{\sharp/\flat}(\QQ_{\cyc,\p})$ are given by the exact annihilators of certain Coleman maps (see \cite[Definition~7.9]{sprung}).
}

\section{Proof of Theorem~\ref{thm:main}}\label{S:proof}
{This section is dedicated to the proof of the main theorem of this article. We remark that some  of the ingredients of the proof were also utilized in \cite[proof of Proposition~3.4]{KP}.}

Throughout this section, $f\in \Lambda$ is a fixed irreducible element that is coprime to $X$. {We write $(\circ,\bullet)=(+,-)$ or $(\sharp,\flat)$, depending on whether $a_p(E)=0$ or $a_p(E)\ne0$.} Suppose that $f$ does not divide {$\gcd(L_p^\circ(E),L_p^\bullet(E))$}. Then it does not divide $\Char_\Lambda\Sel_0(E/\QQ_\cyc)^\vee$ by \eqref{eq:finepm}. This gives one of the two implications of Theorem~\ref{thm:main}. The rest of this section will be dedicated to the proof of the opposite implication, which is less straightforward. From now on, we assume that
\begin{equation}
    f\nmid\Char_\Lambda\Sel_0(E/\QQ_\cyc)^\vee.
\label{eq:hyp0}
\end{equation}
If we combine this with \eqref{Ka}, we have
\begin{equation}\label{eq:hyp}
   f\nmid \Char_\Lambda\HIw(\QQ,T)/\cZ. 
\end{equation}

The following proposition is one of the key ingredients of the proof of Theorem~\ref{thm:main}. 
\begin{proposition}\label{pro:key}
We have $$f\nmid \Char_\Lambda\left(\HIw(\Qp,T)/\cZ_\loc\right)_\tor.$$
\end{proposition}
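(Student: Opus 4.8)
The plan is to reduce the local statement to its global counterpart via the localization map $\loc_p$, and then to control the resulting cokernel by global duality. First, since $\loc_p$ is injective, it induces a short exact sequence of finitely generated $\Lambda$-modules
\[
0\longrightarrow \HIw(\QQ,T)/\cZ\longrightarrow \HIw(\Qp,T)/\cZ_\loc\longrightarrow C\longrightarrow 0,
\]
where $C=\coker\bigl(\loc_p\colon\HIw(\QQ,T)\to\HIw(\Qp,T)\bigr)$. The left-hand term is $\Lambda$-torsion, since $\HIw(\QQ,T)$ has $\Lambda$-rank one and $\cZ\cong\cZ_\loc$ is generated by the non-torsion element $\zk$; moreover by \eqref{eq:hyp} it satisfies $f\nmid\Char_\Lambda\bigl(\HIw(\QQ,T)/\cZ\bigr)$. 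Applying Lemma~\ref{lemmas}(b) to the displayed sequence, it therefore suffices to prove that $f\nmid\Char_\Lambda C_\tor$.

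For this I would invoke Poitou--Tate global duality for Iwasawa cohomology over the cyclotomic tower. Using the Selmer structure for $T$ which is strict at $p$ and relaxed at the other places of $S$ (whose associated strict Selmer group vanishes, precisely because $\loc_p$ is injective), the duality exact sequence produces an injection $C\hookrightarrow S^\vee$, where $S$ is the Selmer group of $\Ep$ over $\QQ_\cyc$ that is relaxed at $p$ and strict away from $p$. Comparing $S$ with the fine Selmer group $\Sel_0(E/\QQ_\cyc)$, which is strict at every place since for $v\nmid p$ the local Kummer condition is automatically trivial over $\QQ_\cyc$, yields the short exact sequence
\[
0\longrightarrow \Img_p^\vee\longrightarrow S^\vee\longrightarrow \Sel_0(E/\QQ_\cyc)^\vee\longrightarrow 0,
\]
where $\Img_p\subseteq H^1(\QQ_{\cyc,\p},\Ep)$ is the image of $S$ under localization at $p$. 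By \eqref{eq:hyp0} we have $f\nmid\Char_\Lambda\Sel_0(E/\QQ_\cyc)^\vee$, so Lemma~\ref{lemmas}(b) reduces the whole problem to showing $f\nmid\Char_\Lambda(\Img_p^\vee)_\tor$; granting this, Lemma~\ref{lemmas}(a) applied to $C\hookrightarrow S^\vee$ gives $f\nmid\Char_\Lambda C_\tor$ and hence the proposition.

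The hard part will be this last reduction, namely controlling the local term $(\Img_p^\vee)_\tor$. Because $\HIw(\Qp,T)$ has $\Lambda$-rank two while $\loc_p$ is injective on the rank-one module $\HIw(\QQ,T)$, the cokernel $C$ has $\Lambda$-rank one and is therefore far from torsion; its torsion submodule measures exactly the failure of $\loc_p(\HIw(\QQ,T))$ to be saturated inside $\HIw(\Qp,T)$, and the subtlety is to separate this saturation defect from the rank-one free part. The natural strategy is to show that $\Img_p^\vee$, being a quotient of the free module $\HIw(\Qp,T)$, is in fact $\Lambda$-torsion-free up to a pseudo-null module, equivalently that the discrete module $\Img_p$ is $\Lambda$-divisible up to pseudo-null; this would force $(\Img_p^\vee)_\tor$ to have trivial characteristic ideal and so complete the argument for every $f$, including $f=p$.
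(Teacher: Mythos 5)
Your first reduction is exactly the paper's: the injectivity of $\loc_p$ gives the short exact sequence with middle term $\HIw(\Qp,T)/\cZ_\loc$, the left term is torsion of controlled characteristic ideal by \eqref{Ka} and \eqref{eq:hyp0}, and everything comes down to showing $f\nmid\Char_\Lambda C_\tor$ for $C=\coker(\loc_p)$. The problem is what you do next. The Poitou--Tate step is circular: under local Tate duality $\HIw(\Qp,T)\cong H^1(\QQ_{\cyc,\p},\Ep)^\vee$, the image $\Img_p$ of the relaxed-at-$p$ Selmer group $S$ is precisely the exact annihilator of $\loc_p\left(\HIw(\QQ,T)\right)$, so $\Img_p^\vee\cong \HIw(\Qp,T)/\loc_p\left(\HIw(\QQ,T)\right)=C$; indeed your two exact sequences $0\to C\to S^\vee\to\Sel_0(E/\QQ_\cyc)^\vee\to 0$ and $0\to\Img_p^\vee\to S^\vee\to\Sel_0(E/\QQ_\cyc)^\vee\to 0$ are the same sequence. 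So ``it suffices to show $f\nmid\Char_\Lambda(\Img_p^\vee)_\tor$'' is the original problem restated, and the detour through $S^\vee$ and Lemma~\ref{lemmas}(a),(b) has bought nothing: knowing the torsion of the quotient $\Sel_0(E/\QQ_\cyc)^\vee$ cannot by itself control the torsion of the submodule $C$ of $S^\vee$.

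The missing ingredient is a genuine input, namely Wingberg's theorem (cited in the paper as \cite[Corollary~2.5]{win}) that
\[
\Char_\Lambda\Sel_{p^\infty}(E/\QQ_\cyc)^\vee_\tor=\Char_\Lambda\Sel_{0}(E/\QQ_\cyc)^\vee .
\]
Granting this, hypothesis \eqref{eq:hyp0} gives $f\nmid\Char_\Lambda\Sel_{p^\infty}(E/\QQ_\cyc)^\vee_\tor$, and the injection $C\hookrightarrow\Sel_{p^\infty}(E/\QQ_\cyc)^\vee$ from the Poitou--Tate sequence together with Lemma~\ref{lemmas}(a) finishes the proof. Your proposed endgame --- that $\Img_p^\vee$ is torsion-free up to pseudo-null, i.e.\ that $C_\tor$ is pseudo-null --- is strictly stronger than what is needed (it would give the conclusion for every $f$ unconditionally on \eqref{eq:hyp0}), it is not proved in your sketch, and it is not clear it holds in general: the exact sequence only forces $\Char_\Lambda C_\tor$ to divide $\Char_\Lambda\Sel_0(E/\QQ_\cyc)^\vee$, and when the fine Selmer group is not pseudo-null some of that torsion could a priori live in $C$. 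As written, the proof has a genuine gap at its crucial step.
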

\begin{proof}
The injectivity of $\loc_p$ gives the following short exact sequence
\[
0\rightarrow \HIw(\QQ,T)/\cZ\rightarrow \HIw(\Qp,T)/\cZ_\loc\rightarrow \HIw(\Qp,T)/\loc_p\left(\HIw(\QQ,T)\right)\rightarrow 0.
\]
Since the $\Lambda$-module $\HIw(\QQ,T)$ is of rank one (see \cite[Theorem~12.4]{kato}),  the first term of the short exact sequence is $\Lambda$-torsion. Therefore, thanks to Lemma~\ref{lemmas}(c), it is enough to show that
\[
f\nmid \Char_\Lambda\HIw(\QQ,T)/\cZ\quad \text{and}\quad f\nmid  \Char_\Lambda\left(\HIw(\Qp,T)/\loc_p\left(\HIw(\QQ,T)\right)\right)_\tor.
\]

The first indivisibility is a direct consequence of our hypothesis that both \eqref{Ka} and \eqref{eq:hyp0} hold. For the second indivisibility, we consider the Poitou-Tate exact sequence
\begin{equation}
    0\rightarrow \HIw(\QQ,T)\stackrel{\loc_p}{\longrightarrow}\HIw(\Qp,T)\rightarrow \Sel_{p^\infty}(E/\QQ_\cyc)^\vee\rightarrow  \Sel_0(E/\QQ_\cyc)^\vee\rightarrow0
\label{eq:PT}
\end{equation}
(which is obtained by taking inverse limit in \cite[(7.18)]{kobayashi}). By \cite[Corollary~2.5]{win}, we have the equality
\[
 \Char_\Lambda\Sel_{p^\infty}(E/\QQ_\cyc)^\vee_\tor=\Char_\Lambda\Sel_{0}(E/\QQ_\cyc)^\vee.
\]
Therefore, our hypothesis \eqref{eq:hyp0} tells us that 
\[
f\nmid  \Char_\Lambda\Sel_{p^\infty}(E/\QQ_\cyc)^\vee_\tor.
\]
We may therefore apply Lemma~\ref{lemmas}(a) to \eqref{eq:PT} to deduce that 
\[
f\nmid\Char_\Lambda\left(\HIw(\Qp,T)/\loc_p\left(\HIw(\QQ,T)\right)\right)_\tor
\]
as required.
\end{proof}

We recall from \cite[\S\S 8.5-8.6]{kobayashi} and {\cite[\S7]{sprung}} that there are two surjective $\Lambda$-homomorphisms 
\[
\col^{\circ/\bullet}:\HIw(\Qp,T)\rightarrow \Lambda,
\]
which are called the plus and minus {(or sharp and flat) Coleman maps of $E$. Furthermore, \cite[Theorem~6.3]{kobayashi} and \cite[Definition~6.1]{sprung} tell us that
\begin{equation}
    \label{eq:pm-Col-Lp}
    L_p^{\circ/\bullet}(E)=\col^{\circ/\bullet}\left(\loc_p \left(\zk\right)\right).
\end{equation}}
(Note that we are taking $\eta$ to be the trivial character in the notation of op. cit.)

If we write 
\begin{align*}
    \ucol:\HIw(\Qp,T)&\rightarrow \Lambda^{\oplus 2}\\
z&\mapsto  {\col^\circ(z)\oplus\col^\bullet(z)},
\end{align*}
then we have a short exact sequence of $\Lambda$-modules
\begin{equation}
    0\rightarrow \HIw(\Qp,T)\stackrel{\ucol}{\longrightarrow}\Lambda^{\oplus 2}\rightarrow \Zp\rightarrow 0
\label{eq:SES-KP}
\end{equation}
as given by \cite[Proposition~1.2]{KP} {and \cite[Proposition~4.7]{sprung}}.

If we combine \eqref{eq:pm-Col-Lp} and \eqref{eq:SES-KP}, we deduce the short exact sequence
\begin{equation}
    0\rightarrow \HIw(\Qp,T)/\cZ_\loc\rightarrow \Lambda^{\oplus 2}/(L_p^\circ(E)\oplus L_p^\bullet(E))\Lambda\rightarrow \Zp\rightarrow 0.
\end{equation}
But $\Char_\Lambda\Zp=(X)$, which is coprime to $f$ by assumption. Hence,  we deduce from Proposition~\ref{pro:key} and Lemma~\ref{lemmas}(b) that 
\[
f\nmid  \Char_\Lambda\left(\Lambda^{\oplus 2}/(L_p^\circ(E)\oplus L_p^\bullet(E))\Lambda\right)_\tor.
\]
In particular, $f\nmid \gcd(L_p^\circ(E),L_p^\bullet(E))$, which concludes the proof of Theorem~\ref{thm:main}.

\section{Numerical examples}\label{S:ex}

We discuss the two elliptic curves studied in \cite[\S10]{wuhtrich}, namely, $37A1$ and $53A1$, both of which are of rank one over $\QQ$ with $L(E/\QQ,1)=0$.

\subsection*{$E=37A1$}
According to \cite[Proposition~10.1]{wuhtrich}, the fine Selmer group of $E$ over $\QQ_\cyc$ is finite for all primes $p<1000$. In particular, 
\[
\Char_\Lambda \Sel_0(E/\QQ_\cyc)^\vee=\Lambda
\]
for these primes.
Note that $E$ has supersingular reduction at the primes $p=3,17,19 $ with $a_3(E)=-3$ and $a_{17}(E)=a_{19}(E)=0$. Theorem~\ref{thm:main} tells that if $f\in\Lambda$ is an irreducible element dividing $\gcd\left(L_p^\sharp(E),L_p^\flat(E)\right)$ (resp. $\gcd\left(L_p^+(E),L_p^-(E)\right)$) when $p=3$ (resp. $p=17$ or $19$), then $f$ has to be (up to a unit) equal to $X$. In fact, we can even work out the greatest common divisors explicitly in these cases.

Since $L(E/\QQ,1)=0$, it follows from the interpolation formulae of the $p$-adic $L$-functions given in \cite[Page~14980]{sprung} and \cite[Page~7]{kobayashi} that $X$ divides $L_p^{\sharp/\flat}(E)$ (when $p=3$) and $L_p^\pm(E)$ (when $p=17$ or $19$). According to Pollack's table  \url{http://math.bu.edu/people/rpollack/Data/37A.p} (see also \cite[Example~7.12]{sprung-ANT} where the case $p=3$ is discussed), one of the two $p$-adic $L$-functions has $\lambda$-invariant equal to 1. This implies  that
\[
\gcd\left(L_p^\sharp(E),L_p^\flat(E)\right)=X
\]
if $p=3$ and 
\[
\gcd\left(L_p^+(E),L_p^-(E)\right)=X
\]
if $p=17$ or $19$. Note in particular that the equation \eqref{eq:Xgcd} holds for this curve when $p=17$ and $19$. Furthermore, the $\mu$-invariants of $\Sel_0(E/\QQ_\cyc)^\vee$, $\Sel_{p^\infty}(E/\QQ_\cyc)_\tor^\vee$, $\Sel^{\sharp/\flat}(E/\QQ_\cyc)^\vee$ (when $p=3$) and $\Sel^\pm(E/\QQ_\cyc)^\vee$ (when $p=17,19$) are all zero.

\subsection*{$E=53A1$} Once again, $E$ is supersingular at $p=3$ with $a_3(E)=-3$. Wuthrich showed that the fine Selmer group over $\QQ_\cyc$ is finite when $p=3$ and Pollack's table \url{http://math.bu.edu/people/rpollack/Data/curves1-5000} tells us that $L_p^{\sharp/\flat}(E)=X$ (up to a unit). Therefore, we can deduce once more that
\begin{align*}
\Char_\Lambda \Sel_0(E/\QQ_\cyc)^\vee&=\Lambda,\\    
\gcd\left(L_p^\sharp(E),L_p^\flat(E)\right)&=X,
\end{align*}
illustrating Theorem~\ref{thm:main}.

Note that $E$ has supersingular reduction at $p=5,11$ and $a_5(E)=a_{11}(E)=0$. Pollack's table tells us that $L_p^\pm(E)=X$ (up to a unit) in these cases. We may apply the argument  in \cite[\S10]{wuhtrich} to deduce that the fine Selmer group of $E$ over $\QQ_\cyc$ is finite. This again illustrates Theorem~\ref{thm:main} and gives examples where the equality \eqref{eq:Xgcd} holds. Furthermore, the $\mu$-invariants of $\Sel_0(E/\QQ_\cyc)^\vee$, $\Sel_{p^\infty}(E/\QQ_\cyc)_\tor^\vee$, $\Sel^{\sharp/\flat}(E/\QQ_\cyc)^\vee$ (when $p=3$) and $\Sel^\pm(E/\QQ_\cyc)^\vee$ (when $p=5,11$) vanish.
\bibliographystyle{amsalpha}
\bibliography{references}

\providecommand{\bysame}{\leavevmode\hbox to3em{\hrulefill}\thinspace}
\providecommand{\MR}{\relax\ifhmode\unskip\space\fi MR }
\providecommand{\MRhref}[2]{%
  \href{http://www.ams.org/mathscinet-getitem?mr=#1}{#2}
}
\providecommand{\href}[2]{#2}
\begin{thebibliography}{Wan14}

\bibitem[BK90]{BK}
Spencer Bloch and Kazuya Kato, \emph{{$L$}-functions and {T}amagawa numbers of
  motives}, The {G}rothendieck {F}estschrift, {V}ol. {I}, Progr. Math.,
  vol.~86, Birkh\"{a}user Boston, Boston, MA, 1990, pp.~333--400.

\bibitem[CS05]{CS}
J.~Coates and R.~Sujatha, \emph{Fine {S}elmer groups of elliptic curves over
  {$p$}-adic {L}ie extensions}, Math. Ann. \textbf{331} (2005), no.~4,
  809--839.

\bibitem[HL19]{HL}
Jeffrey Hatley and Antonio Lei, \emph{Comparing anticyclotomic {S}elmer groups
  of positive coranks for congruent modular forms}, Math. Res. Lett.
  \textbf{26} (2019), no.~4, 1115--1144.

\bibitem[Kat04]{kato}
Kazuya Kato, \emph{{$p$}-adic {H}odge theory and values of zeta functions of
  modular forms}, no. 295, 2004, Cohomologies $p$-adiques et applications
  arithm\'{e}tiques. III, pp.~ix, 117--290.

\bibitem[Kob03]{kobayashi}
Shin-ichi Kobayashi, \emph{Iwasawa theory for elliptic curves at supersingular
  primes}, Invent. Math. \textbf{152} (2003), no.~1, 1--36.

\bibitem[KP07]{KP}
Masato Kurihara and Robert Pollack, \emph{Two {$p$}-adic {$L$}-functions and
  rational points on elliptic curves with supersingular reduction},
  {$L$}-functions and {G}alois representations, London Math. Soc. Lecture Note
  Ser., vol. 320, Cambridge Univ. Press, Cambridge, 2007, pp.~300--332.

\bibitem[KS20]{KS}
Debanjana Kundu and R.~Sujatha, \emph{Structure of fine selmer groups in
  $p$-adic {L}ie extensions}, 2020, preprint.

\bibitem[Pol03]{pollack}
Robert Pollack, \emph{On the {$p$}-adic {$L$}-function of a modular form at a
  supersingular prime}, Duke Math. J. \textbf{118} (2003), no.~3, 523--558.

\bibitem[PR03]{P-R}
Bernadette Perrin-Riou, \emph{Arithm\'{e}tique des courbes elliptiques \`a
  r\'{e}duction supersinguli\`ere en {$p$}}, Experiment. Math. \textbf{12}
  (2003), no.~2, 155--186.

\bibitem[PR04]{PR}
Robert Pollack and Karl Rubin, \emph{The main conjecture for {CM} elliptic
  curves at supersingular primes}, Ann. of Math. (2) \textbf{159} (2004),
  no.~1, 447--464.

\bibitem[Spr12]{sprung}
Florian E.~Ito Sprung, \emph{Iwasawa theory for elliptic curves at
  supersingular primes: a pair of main conjectures}, J. Number Theory
  \textbf{132} (2012), no.~7, 1483--1506.

\bibitem[Spr13]{sprung-crelles}
\bysame, \emph{The \v{S}afarevi\v{c}-{T}ate group in cyclotomic {$\Bbb
  Z_p$}-extensions at supersingular primes}, J. Reine Angew. Math. \textbf{681}
  (2013), 199--218.

\bibitem[Spr16]{sprung2}
\bysame, \emph{The {I}wasawa main conjecture for elliptic curves at odd
  supersingular primes}, arXiv: 1610.10017, 2016.

\bibitem[Spr17]{sprung-ANT}
\bysame, \emph{On pairs of {$p$}-adic {$L$}-functions for weight-two modular
  forms}, Algebra Number Theory \textbf{11} (2017), no.~4, 885--928.

\bibitem[Wan14]{wan}
Xin Wan, \emph{{I}wasawa main conjecture for supersingular elliptic curves and
  bsd conjecture}, 2014, Preprint, arXiv: 1411.6352.

\bibitem[Win89]{win}
Kay Wingberg, \emph{Duality theorems for abelian varieties over {${\bf
  Z}_p$}-extensions}, Algebraic number theory, Adv. Stud. Pure Math., vol.~17,
  Academic Press, Boston, MA, 1989, pp.~471--492.

\bibitem[Wut07]{wuhtrich}
Christian Wuthrich, \emph{Iwasawa theory of the fine {S}elmer group}, J.
  Algebraic Geom. \textbf{16} (2007), no.~1, 83--108.

\end{thebibliography}

\end{document}